\documentclass[letterpaper, 10 pt, conference]{ieeeconf}  

\IEEEoverridecommandlockouts                              

\usepackage{amsmath} 
\usepackage{amssymb}  
\usepackage{mathtools}

\usepackage[normalem]{ulem}

\usepackage{graphicx}      
\usepackage{subcaption}    
\usepackage{float}         
\usepackage{acronym}       
\usepackage{cite}
\usepackage{pgfplots}
\pgfplotsset{compat=1.18} 
\usepackage{mathtools}
\usepackage{bbm}
\usetikzlibrary{calc}

\newcommand{\G}{\mathcal{G}}
\newcommand{\V}{\mathcal{V}}
\newcommand{\E}{\mathcal{E}}
\newcommand{\T}{\mathcal{T}}

\newcommand{\R}{\mathbb{R}}
\DeclareMathOperator{\Img}{Im}
\DeclareMathOperator{\Ker}{Ker}
\DeclareMathOperator{\rank}{rk}

\DeclareMathOperator{\col}{col}

\newtheorem{theorem}{Theorem}
\newtheorem{lemma}{Lemma}

\newtheorem{definition}{Definition}
\newtheorem{remark}{Remark}
\newtheorem{assumption}{Assumption}

\title{\LARGE \bf
Combinatorial Admissibility in  Control-Affine Networks
}

\author{Daniel Zelazo, \IEEEmembership{Senior Member, IEEE} and Louis Theran
\thanks{This work was supported by the Israel Science Foundation grant no. 453/24 and UK Research and Innovation though the grant
UKRI1112.}
\thanks{D. Zelazo is with the Stephen B. Klein Faculty of Aerospace Engineering, Technion-Israel Institute of Technology, Haifa 320003, Israel 
        {\tt\small dzelazo@technion.ac.il}}%
\thanks{L. Theran is with the School of Mathematics and Statistics, University of St Andrews,
Scotland {\tt\small lst6@st-andrews.ac.uk}}.
}

\begin{document}

\maketitle
\thispagestyle{empty}
\pagestyle{empty}

\begin{abstract}
We study synchronization of heterogeneous control-affine nonlinear agents interconnected through diffusive (relative-output) measurements.
We separate the design into an \emph{edge-space} step—specifying a
stabilizing model evolution for relative outputs—and a \emph{lift} step—realizing the prescribed edge motion using the agents' allowable input directions, constrained by the control-affine geometry of the agents.
We introduce an admissibility notion that characterizes
when an edge-driven diffusive design is feasible. 
\textcolor{black}{We derive checkable combinatorial certificates that connect graph
topology and actuation limits directly to admissibility, so that
feasible edge dynamics can be verified in a practical and transparent
way.} The results are illustrated on synchronization of nonlinear oscillators.
\end{abstract}

\section{Introduction}

One of the canonical approaches for the coordination of multi-agent systems is through diffusive coupling.  In this architecture, agents interact through relative information with their
neighbors, and a collective behavior, like synchronization or formation keeping, emerges from local rules. For integrator and linear agent models this viewpoint leads to a mature theory that
connects convergence to graph connectivity and Laplacian structure \cite{MesbahiEgerstedt2010}. For nonlinear agents, however, there is an additional layer that is not apparent in the linear setting. The coupling law specifies how
relative quantities should evolve, while the plant dynamics may constrain which relative motions can actually be produced through the available input
directions. Understanding this gap between “desired” and “realizable” relative behavior is central to synchronization beyond the linear regime.

Nonlinear synchronization has a rich history and has been
studied using passivity, contraction, and geometric methods;
see, e.g., \cite{Arcak2007, BurgerDePersis2015,
WielandSepulchreAllgower2011, ScardoviSepulchre2009,
PhamSlotine2007}. In geometric settings, where states evolve on nonlinear spaces (e.g., Lie groups or Riemannian manifolds), intrinsic distances and gradient flows provide natural coordination laws and reveal global topological constraints; see \cite{SarletteSepulchre2009, TronAfsariVidal2013}.

The geometric approach adopted here is inspired by the edge-space viewpoint introduced in our previous work \cite{Theran_IFAC26} to study formation control problems.  In \cite{Theran_IFAC26}, a geometric template that separates coordination into an \emph{edge-space design} step and a \emph{node-space realization} step was introduced. The starting point is a non-linear measurement map from configurations to edge measurements, whose image forms a feasible set in measurement space; an artificial edge dynamical system constrained to evolve on this feasible set (e.g., via a Riemannian gradient flow) is proposed, and then the \textcolor{black}{agent} dynamics are obtained by lifting the edge dynamics to node space.  The strength of this 
approach lies in the  fact that the edge dynamics are easier to analyze and yield a more flexible design space.  In 
\cite{Theran_IFAC26}, the \textcolor{black}{agent} dynamics are integrators, so any edge dynamics admit \emph{some} lifting: 
once a feasible edge velocity is specified, it can always be implemented by a suitable choice of \textcolor{black}{agent} velocities. 
The present note studies a complementary situation of a synchronization problem in which the 
 measurement map factors through a linear graph operator, but the \textcolor{black}{agent}
dynamics are constrained.  The main question is then how to design edge dynamics that are realizable.
The answer turns out to depend not just on the edge dynamics but on a structural compatibility
between the graph topology and the agents' actuation directions, a condition that is automatically 
satisfied for integrator \textcolor{black}{agent} dynamics but becomes the central obstruction in the control-affine 
case we study here.  

Our contributions are as follows. First, we formulate diffusive synchronization for heterogeneous non-linear control-affine agents in an explicit edge-space geometry: relative outputs evolve on a feasible set induced by the measurement map, and we design stabilizing edge dynamics directly in that space. Second, we show how to realize the desired edge evolution by a minimum-norm lifting law for control-affine \textcolor{black}{agent} dynamics, identifying a feasibility condition that we term \emph{admissibility}, that characterizes when the edge-driven design can be implemented by the available input directions. Third, we provide tractable, structure-based admissibility tests by relating generic feasibility to the structured rank of the lifted edge map and to maximum matchings in an associated bipartite graph, thereby making the interaction between graph topology and actuation constraints explicit. Finally, we illustrate the theory on a nonlinear oscillator synchronization example, where the matching certificate accurately predicts success and failure under two actuation patterns.


\paragraph*{Notation}
Let $\R$ denote the real numbers. For an integer $n$, $\mathbf 1_n\in\R^n$ is the all-ones vector, $I_n$ the $n\times n$ identity, and \textcolor{black}{$\bf 0$  the zero-vector}. For matrices, $\Img(\cdot)$, $\Ker(\cdot)$, and $\rank(\cdot)$ denotes the image, kernel, and rank, while $(\cdot)^\top$ transpose, and $(\cdot)^\dagger$ the Moore--Penrose pseudoinverse. For vectors $x_i$, $\col(x_1,\ldots,x_n)$ denotes vertical stacking, and $\mathrm{blkdiag}(\cdot)$ the block-diagonal operator. The Kronecker product is $\otimes$.


\section{Problem Setup}
\label{sec:problem_setup}

We consider a network of $N$ agents indexed by $\V:=\{1,\dots,N\}$. Agent $i$ has state
$x_i\in\mathbb{R}^{n_i}$, input $u_i\in\mathbb{R}^{p_i}$, and measured/regulated output
$y_i\in\mathbb{R}^{d}$. The dynamics are assumed to be \emph{control-affine},
\begin{equation}
\dot x_i = f_i(x_i) + G_i(x_i)\,u_i, 
\qquad y_i = h_i(x_i),
\label{eq:control_affine_agent}
\end{equation}
where $f_i:\mathbb{R}^{n_i}\to \mathbb{R}^{n_i}$, $G_i:\mathbb{R}^{n_i}\to \mathbb{R}^{n_i\times p_i}$,
and $h_i:\mathbb{R}^{n_i}\to \mathbb{R}^{d}$ are analytic. We stack the states, inputs, and outputs as
$x := \col(x_1,\dots,x_N)$, $
u := \col(u_1,\ldots,u_N)$, and $
y := \col(y_1,\ldots,y_N)=h(x)\in\mathbb{R}^{Nd}$.
We similarly stack the drift and input matrices as
$f(x):=\col(f_1(x_1),\ldots,f_N(x_N))$ and $
G(x):=\mathrm{blkdiag}(G_1(x_1),\ldots,G_N(x_N))$.

Agents exchange relative output information over a graph $\G=(\V,\E)$ with $|\E|=m$. For clarity, we begin with an undirected graph and assign an arbitrary orientation to each edge. Let $B\in\mathbb{R}^{N\times m}$ denote the incidence matrix. For an oriented edge $e=(i,j)$, the corresponding column $b_e$ has $-1$ in row $i$, $+1$ in row $j$, and $0$ elsewhere.

Define the \emph{edge disagreement} (relative output) signal as
\begin{equation*}
z \;:=\; (B^\top\!\otimes I_d)\,y \;\in\; \mathbb{R}^{md}.
\label{eq:edge_disagreement_z}
\end{equation*}
Then the control goal is \emph{output \textcolor{black}{synchronization}},
$\lim_{t\to\infty} \|z(t)\| = \textcolor{black}{\bf 0},$
i.e., $y_i(t)-y_j(t)\to \textcolor{black}{\bf 0}$ for every edge $(i,j)\in \E$. If $\G$ is connected, \textcolor{black}{synchronization} implies each agent $y_i(t)\to \bar y(t)$ for a common trajectory $\bar y(t)\in\mathbb{R}^d$.

Equivalently, \textcolor{black}{synchronization} corresponds to convergence to the \emph{\textcolor{black}{synchronization} manifold},
\begin{equation}
\mathcal{A} \;:=\; \{x:\ \textcolor{black}{F(x)}=(B^\top\!\otimes I_d)\,h(x)=\textcolor{black}{\bf 0}\} \;=\; F^{-1}(\textcolor{black}{\bf 0}),
\label{eq:agreement_manifold}
\end{equation}
where $F:\mathbb{R}^{\sum n_i}\to \mathbb{R}^{md}$ is the \emph{edge map} defined as
\begin{equation}
F(x) := (B^\top\!\otimes I_d)\,h(x).
\label{eq:edge_map_F}
\end{equation}

We focus on distributed controllers that depend on the local state and relative outputs,
\textcolor{black}{
$
u_i=\kappa_i\big(x_i, \mathrm{col}_{j \in \mathcal N_i}(y_j - y_i) \big),$ where $\kappa_i :\mathbb{R}^{n_i} \times \mathbb{R}^{d|\mathcal N_i|} \to \mathbb{R}^{p_i}$,
}
and $\mathcal{N}_i$ denotes the neighbor set of \textcolor{black}{vertex} $i$ in $\G$. A canonical static diffusive coupling architecture takes the form
\begin{equation*}
u_i = \phi_i(x_i)\;-\;\sum_{j\in\mathcal{N}_i} a_{ij}\,\psi\!\big(y_i-y_j\big),
\label{eq:diffusive_general}
\end{equation*}
with weights $a_{ij}>0$ and typically $\psi(\cdot)$ odd and monotone (e.g., $\psi(\eta)=\eta$). When 
$\phi_i$ is zero and $\psi$ is the identity, the coupling is a graph Laplacian.


\section{Edge-Space Geometry and a Model System}
\label{sec:edge_geometry_model}
We begin with the edge map introduced in \eqref{eq:edge_map_F}, which relates the state of each agent to the edge state as $z=F(x)$. Let
\begin{equation*}
Q := \mathrm{Im}\,F \subseteq \mathbb{R}^{md}
\label{eq:feasible_edge_set}
\end{equation*}
denote the feasible set of edge disagreements. Define the Jacobian of $F$ by
\begin{equation}
J(x) := DF(x) = (B^\top\!\otimes I_d)\,Dh(x),
\label{eq:J_def}
\end{equation}
where \textcolor{black}{$D$ denotes the Jacobian operator} and $Dh(x)=\mathrm{blkdiag}(Dh_1(x_1),\dots,Dh_N(x_N))$ .

\begin{assumption}[Constant-rank neighborhood]
\label{ass:constant_rank}
There exists an open neighborhood 
\textcolor{black}{$\Omega\subseteq \mathbb{R}^{\sum_i n_i}$}
of the \textcolor{black}{synchronization} set $\mathcal{A}=F^{-1}(0)$ such that
$\rank(J(x))$ is constant for all $x\in\Omega$.
\end{assumption}

Under Assumption~\ref{ass:constant_rank}, the image $Q_\Omega:=F(\Omega)$ is an immersed (and locally embedded)
submanifold of $\mathbb{R}^{md}$, and for $z=F(x)$ with $x\in\Omega$ the tangent space satisfies
%
\begin{equation*}
T_zQ_\Omega = \mathrm{Im}\,J(x).
\label{eq:tangent_space_Q}
\end{equation*}
In the sequel we restrict attention to $\Omega$ and write $Q$ in place of $Q_\Omega$.

Following the same ideas as from \cite{Theran_IFAC26}, we consider an artificial system of edges with state $z\in \mathbb{R}^{md}$ with integrator dynamics,
\begin{equation}
\dot z = v.
\label{eq:edge_integrator}
\end{equation}
We aim to design a stabilizing edge feedback $v^\star(z)$ on $Q$ that drives $z\to 0$.  In this direction, we consider the Riemannian gradient flow for $V_e(z)=\tfrac12\|z\|^2$, leading to
\begin{equation}
v^\star(F(x)) = -\Pi(x)\,F(x), 
\label{eq:edge_gradient_flow}
\end{equation}
where $\Pi(x):\mathbb{R}^{md}\to T_{F(x)}Q$ is the orthogonal projector onto $T_{F(x)}Q$. To avoid ambiguity when multiple $x$ map to the same $z$, we write the projector as a function of $x$,
\begin{equation*}
\Pi(x) := J(x)J(x)^\dagger,
\label{eq:Pi_def}
\end{equation*}
so that $\Pi(x)$ is the orthogonal projector onto $\Img(J(x))=T_{F(x)}Q$.

We now must map the edge flow dynamics in \eqref{eq:edge_gradient_flow} to the agent dynamics of the system.
Differentiating $z=F(x)$ along \eqref{eq:control_affine_agent} yields
\begin{equation*}
\dot z = J(x)f(x) + J(x)G(x)\,u.
\label{eq:zdot_lift_eq}
\end{equation*}
To realize the desired edge velocity $v^\star(z)$ in \eqref{eq:edge_gradient_flow}, we require
\begin{equation}
J(x)G(x)\,u = v^\star(F(x)) - J(x)f(x).
\label{eq:lift_constraint}
\end{equation}
Among all feasible inputs, we select the minimum-norm lift,
\begin{equation}
\begin{aligned}
u^\star(x)
&:= \arg\min_{u}\ \|u\|^2 \quad \text{s.t. } \eqref{eq:lift_constraint}. 
\end{aligned}
\label{eq:min_norm_lift}
\end{equation}
When the constraint is feasible, this yields the explicit \emph{model controller},
\begin{align}
u^\star(x) &= \big(J(x)G(x)\big)^\dagger\Big(v^\star(F(x)) - J(x)f(x)\Big)\nonumber\\
&=\big(J(x)G(x)\big)^\dagger\Big(-\Pi(x)\,F(x) - J(x)f(x)\Big).
\label{eq:model_controller_u_star}
\end{align}

The remainder of the paper analyzes (i) conditions under which \eqref{eq:min_norm_lift} is feasible and yields closed-loop \textcolor{black}{synchronization}, and (ii) how to obtain distributed approximations and directed variants, culminating in a combinatorial admissibility criterion.

\section{Edge-Driven Controller Families and Distributed Implementations}
\label{sec:family_distributed_undirected}
This section develops an \emph{edge-driven controller family} that extends the model controller \eqref{eq:model_controller_u_star}.
The key observation is that the lift constraint \eqref{eq:lift_constraint} is typically \emph{underdetermined}: many inputs can realize the same desired edge velocity.
We exploit this freedom to (i) parameterize a family of controllers consistent with a prescribed edge flow, and
(ii) identify distributed specializations that recover classical diffusive coupling architectures.

\subsection{A family of lifts for a prescribed edge flow}
\label{subsec:lift_family}
Let 
\begin{subequations}\label{eq:Ax_bx_def}
\begin{align}
A(x)&:=J(x)G(x)\in\mathbb{R}^{md\times p}\label{eq:Ax}\\
b(x)&:=v^\star(F(x)) - J(x)f(x)\in\mathbb{R}^{md}\label{eq:bx}
\end{align}
\end{subequations}
where $p:=\sum_i p_i$.
Then the lift constraint \eqref{eq:lift_constraint} is the linear equation
\begin{equation}
A(x)u=b(x).
\label{eq:Au_equals_b}
\end{equation}
Whenever \eqref{eq:Au_equals_b} is feasible, the set of all solutions is an affine space.
In particular, the Moore--Penrose solution \eqref{eq:model_controller_u_star} is the unique solution with minimum Euclidean norm, and the general solution is
\begin{equation*}
u(x) = u^\star(x) + \big(I - A(x)^\dagger A(x)\big)\,w(x),
\label{eq:general_solution_family}
\end{equation*}
where $w(x)\in\mathbb{R}^{p}$ is an arbitrary (possibly state-dependent) signal.
The matrix $\mathcal{M}(x):=I-A(x)^\dagger A(x)$ is the orthogonal projector onto $\Ker(A(x))$.
Thus, the term $\mathcal{M}(x)w(x)$ does not affect the induced edge velocity $\dot z$ and can be used
to enforce secondary objectives (e.g., input saturation handling, additional regulation tasks, or distributed implementability).

\begin{remark}[Edge-driven family in ``gain'' form]
\label{rem:gain_form_family}
In the special case $f\equiv 0$ (or after exact drift compensation), one can write a simpler edge-driven family,
\begin{equation*}
u(x) = -\nu(x)\,F(x),
\label{eq:gain_form_u}
\end{equation*}
where $\nu(x)\in\mathbb{R}^{p\times md}$ is chosen so that the induced edge dynamics satisfy
$\dot z = -\eta(x)z$ with $\eta(x)=A(x)\nu(x)$.
The model controller corresponds to $\nu(x)=A(x)^\dagger\Pi(x)$.
Distributed realizations (below) correspond to sparse choices of $\nu(x)$. \hfill $\Diamond$
\end{remark}

The edge-space design $v^\star$ is realizable by the physical agents when the required right-hand side lies in the range of $A(x)$.
In particular, we require that inputs can generate \emph{all feasible} edge directions, captured by the map $J(x)$.
This will be formalized as \emph{admissibility} and linked to combinatorial certificates in subsequent sections.


\subsection{Distributed implementation for relative degree-one dynamics}
\label{subsec:distributed_outer_loop}
We now identify a broad and practically important class of distributed controllers obtained by combining
\emph{local right inversion} of the agent output dynamics with a \emph{diffusive outer loop}.
In this direction, we further assume each agent admits relative-degree-one output 
dynamics~\textcolor{black}{\cite{Khalil2002}}, expressed as
\begin{equation}
\dot y_i = a_i(x_i) + H_i(x_i)u_i,
\label{eq:ydot_again}
\end{equation}
where $H_i(x_i)\in\mathbb{R}^{d\times p_i}$ is right invertible on the region of interest.
Define a virtual input $v_i\in\mathbb{R}^d$ and apply the minimum-norm local right inverse
\begin{equation}
u_i = H_i(x_i)^\dagger\big(v_i-a_i(x_i)\big),
\label{eq:local_inversion_again}
\end{equation}
which enforces $\dot y_i=v_i$.  In other words, we can feedback linearize the output dynamics to generate integrator dynamics.  This now leads to our first result.


\begin{theorem}\label{thm:distributed_outer_loop_consensus}
Suppose each agent admits relative-degree-one output dynamics \eqref{eq:ydot_again} on a forward-invariant set
$\mathcal X_i$,
and $H_i(x_i)$ is right invertible for all $x_i\in\mathcal X_i$.
Apply the local right-inversion controller \eqref{eq:local_inversion_again} and let the virtual input be chosen by the diffusive outer loop
\begin{equation}\label{eq:outer_loop_diffusive_again}
v_i \;=\; -\sum_{j=1}^N w_{ij}\big(y_i-y_j\big),
\end{equation}
 with weights $w_{ij}\ge 0$ 
of a (directed) graph $\G$.
If $\G$ contains a (directed) spanning tree, then the outputs reach consensus as $t\to\infty$.
\end{theorem}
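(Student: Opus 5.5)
The plan is to reduce the closed loop, at the level of the outputs, to linear Laplacian consensus and then invoke the standard spectral result for graphs with a directed spanning tree.

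\textbf{Step 1: reduction to integrators.} On the forward-invariant set $\mathcal X_i$, $H_i(x_i)$ is right invertible, so $H_i(x_i)H_i(x_i)^\dagger=I_d$. Substituting \eqref{eq:local_inversion_again} into \eqref{eq:ydot_again} gives
\[
\dot y_i = a_i(x_i)+H_i(x_i)H_i(x_i)^\dagger\big(v_i-a_i(x_i)\big)=v_i .
\]
Using the outer loop \eqref{eq:outer_loop_diffusive_again} and stacking, the outputs satisfy the autonomous linear system
\[
\dot y = -(L\otimes I_d)\,y,\qquad L=D-W,
\]
where $W=[w_{ij}]$ and $D=\mathrm{diag}\big(\sum_j w_{ij}\big)$. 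This output equation is closed: it does not involve the remaining internal states $x_i$, provided trajectories stay in $\mathcal X_i$. That is guaranteed by forward invariance, assuming initial conditions are in $\prod_i\mathcal X_i$ and solutions exist for all time. For completeness I would note that the right-hand side is locally Lipschitz, since $H_i^\dagger$ is analytic where $H_i$ has constant full row rank. Forward invariance should then exclude finite escape, or I would take that as part of the hypothesis.

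\textbf{Step 2: spectral property of $L$.} The key fact is that if $\G$ contains a directed spanning tree, then $0$ is a simple eigenvalue of $L$ with right eigenvector $\mathbf 1_N$, and all other eigenvalues have strictly positive real part. The argument runs as follows.
\begin{itemize}
\item By Gershgorin, every eigenvalue lies in a disc centred at $d_i\ge0$ with radius $d_i$, so it lies in the closed right half-plane and touches the imaginary axis only at $0$.
\item Simplicity of $0$ follows because $\rank L=N-1$ exactly when a spanning tree exists. One way to see this: a kernel vector attains its maximum at some vertex, and the maximum propagates along edges to the root. Equivalently, one can cite the standard result of Ren--Beard / Olfati-Saber.
\end{itemize}
This is the step that needs the most care in the directed case, particularly the edge-orientation convention, meaning whether $w_{ij}>0$ encodes $j\to i$. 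I would simply cite it as known.

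\textbf{Step 3: convergence.} Let $r^\top$ be the normalized left eigenvector with $r^\top L=0$ and $r^\top\mathbf 1=1$. Then
\[
e^{-Lt}\to \mathbf 1 r^\top,
\]
so $y(t)\to \mathbf 1_N\otimes\big((r^\top\otimes I_d)y(0)\big)$ exponentially. Hence $y_i-y_j\to\mathbf 0$ for all $i,j$, which is consensus.

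I expect the main obstacle to be justifying the well-posedness and invariance assumptions, rather than the consensus argument itself, which is standard once the reduction in Step 1 holds.
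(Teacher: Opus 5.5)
Your proposal is correct and follows the paper's route: the paper notes that the local right inversion enforces $\dot y_i=v_i$, reducing the outputs to single integrators, and then cites the classical directed-spanning-tree Laplacian consensus result (via \cite{Chopra2008}), which is exactly your Steps 1--3. One point to tighten: $\rank L=N-1$ only shows that $0$ has geometric multiplicity one, while $e^{-Lt}\to\mathbf 1 r^\top$ needs algebraic simplicity; this follows because $e^{-Lt}$ is nonnegative and row-stochastic, hence bounded, so $0$ cannot carry a nontrivial Jordan block (and your maximum-propagation argument should be applied to both the maximum and the minimum of a kernel vector).
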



\textcolor{black}{The result illustrates that 
the proposed framework recovers the classical diffusive coupling
structure for relative-degree-one systems; see, e.g.,
\cite{Chopra2008} for a proof.}

\begin{remark}[Relation to the edge-driven family]
\label{rem:relation_family_diffusive}
The distributed controller \eqref{eq:local_inversion_again}--\eqref{eq:outer_loop_diffusive_again} can be interpreted as a sparse realization of an edge-driven law: the outer loop prescribes a feasible edge evolution (through $v$), while the local inversions realize it agentwise without forming global pseudoinverses such as $(J(x)G(x))^\dagger$.
This provides a bridge between classical diffusive coupling and the edge-space design-and-lift template. \hfill $\Diamond$
\end{remark}

\subsection{Admissibility of edge-driven designs}
\label{subsec:admissibility}

The model-based construction developed above designs a desired edge velocity
$v^\star(F(x))$ on the feasible edge set and then selects an input $u$ by enforcing the instantaneous constraint
\eqref{eq:lift_constraint}. This subsection formalizes when that constraint is feasible and clarifies the closed-loop
implications.

\begin{definition}[Admissibility]
\label{def:admissibility}
    Let $\Omega$ be the constant-rank neighborhood from Assumption~\ref{ass:constant_rank}. Define
$J(x):=DF(x)$ and $A(x):=J(x)G(x)$ as in \eqref{eq:J_def} and \textcolor{black}{\eqref{eq:Ax}}.  The agent--network pair is 
\begin{enumerate}
\item \emph{exactly admissible on $\Omega$} if
\begin{equation}
\Img\big(A(x)\big)=\Img\big(J(x)\big), \text{ for all $x\in\Omega$;}
\label{eq:admissibility_exact}
\end{equation}

\item  \emph{locally generically 
admissible} if, \eqref{eq:admissibility_exact} holds for almost all 
$x\in \Omega$;
\item \emph{generically admissible} if
\eqref{eq:admissibility_exact} holds for almost all 
$x$.
\end{enumerate}
\end{definition}

Condition \eqref{eq:admissibility_exact} states that the inputs can generate \emph{every feasible infinitesimal edge motion}:
since feasible edge velocities satisfy $\dot z\in T_{F(x)}Q=\Img(J(x))$, admissibility ensures that any such $\dot z$ can be
realized through the control channel $G(x)$.

\begin{remark}[Relationship between admissibility concepts]
Because $A(x)$ is the restriction of $J(x)$ to the image of $G(x)$, 
\eqref{eq:admissibility_exact} is equivalent to the transversality 
statement $\Img G(x) + \Ker J(x) = \mathbb{R}^{\sum n_i}$, which, 
in turn is equivalent to the ranks of $A(x)$ and $J(x)$ being equal.  The 
failure of transversality is expressible as a polynomial in $x$, 
and so, if $G(x)$ and $J(x)$ are defined by analytic functions, 
if there is one $x$ at which transversality holds then there is 
an $\Omega\ni x$ such that exact admissibility holds on $\Omega$ and, 
moreover, the controller is generically admissible.  In particular, in 
these cases, local generic admissibility and admissibility are 
equivalent concepts.  For more general classes of functions, such as 
smooth functions, the two may be different.
\hfill $\Diamond$
\end{remark}

\begin{remark}[Exact vs.\ least-squares lifting]
\label{rem:exact_vs_ls}
When $b(x)\in\Img(A(x))$, the constraint $A(x)u=b(x)$ is feasible and the minimum-norm lift \eqref{eq:model_controller_u_star}
enforces the desired edge velocity exactly. When $b(x)\notin\Img(A(x))$, the same pseudoinverse expression yields the
minimum-norm least-squares solution, and the realized edge velocity is the orthogonal projection of $b(x)$ onto $\Img(A(x))$. \hfill $\Diamond$
\end{remark}

\begin{lemma}[Exact realization under admissibility]
\label{lem:realize_edge_flow}
Suppose Assumption~\ref{ass:constant_rank} holds on $\Omega$ and let $\Pi(x):=J(x)J(x)^\dagger$.
Consider the projected-gradient model choice $v^\star(F(x))=-\Pi(x)F(x)$ and the lifted controller
\eqref{eq:model_controller_u_star} written as
\begin{equation}
u^\star(x) = A(x)^\dagger\big(v^\star(F(x)) - J(x)f(x)\big).
\label{eq:u_star_A_form}
\end{equation}
If the admissibility condition \eqref{eq:admissibility_exact} holds at $x\in\Omega$, then the lift constraint
\eqref{eq:lift_constraint} is feasible at $x$ and the closed-loop edge dynamics satisfy
\begin{equation}
\dot z = v^\star(F(x)) = -\Pi(x)\,z,\qquad z=F(x),
\label{eq:closed_loop_edge_flow}
\end{equation}
at that point.  
\textcolor{black}{Moreover, there exists a neighborhood 
$U \subseteq { Q}$ of $\mathbf 0$ such that, for trajectories remaining in 
$U$, the origin $z = \mathbf 0$ is locally exponentially stable.}
\end{lemma}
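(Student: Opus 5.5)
The argument splits into an algebraic feasibility step and a Lyapunov step.

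\emph{Feasibility.} Fix $x\in\Omega$ at which \eqref{eq:admissibility_exact} holds. First show that $b(x)=v^\star(F(x))-J(x)f(x)$ lies in $\Img A(x)$. The term $J(x)f(x)$ lies in $\Img J(x)$ trivially. The term $v^\star(F(x))=-\Pi(x)F(x)=-J(x)J(x)^\dagger F(x)$ also lies in $\Img J(x)$, because $\Pi(x)$ is the orthogonal projector onto that subspace. Hence $b(x)\in\Img J(x)=\Img A(x)$, so \eqref{eq:lift_constraint} is feasible.

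Next use the Moore--Penrose identity $A(x)A(x)^\dagger=$ orthogonal projector onto $\Img A(x)$. This gives $A(x)u^\star(x)=A(x)A(x)^\dagger b(x)=b(x)$. Substituting into $\dot z=J(x)f(x)+A(x)u$ yields $\dot z=v^\star(F(x))=-\Pi(x)z$ with $z=F(x)$. This is \eqref{eq:closed_loop_edge_flow}, and it is purely pointwise linear algebra.

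\emph{Stability.} Take $V_e(z)=\tfrac12\|z\|^2$. Along \eqref{eq:closed_loop_edge_flow},
\[
\dot V_e=-z^\top\Pi(x)z=-\|\Pi(x)z\|^2 ,
\]
using that $\Pi$ is symmetric and idempotent. To obtain exponential decay we need $\|\Pi(x)z\|\ge c\|z\|$ for $z=F(x)$ near $\mathbf 0$, i.e. $z$ must be nearly tangent to $Q$ at $z$.

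The plan for this bound is as follows. By Assumption~\ref{ass:constant_rank} and the constant rank theorem, $Q$ is locally an embedded submanifold near $\mathbf 0\in Q$, and $\mathbf 0=F(x_0)$ for some $x_0\in\mathcal A$. The set $Q$ is contained in the linear subspace $\Img(B^\top\otimes I_d)$, and it contains a curve through $\mathbf 0$ in direction $z$ to first order. More concretely, write $z=F(x)-F(x_0)$ and Taylor expand along the segment to get $z=J(x)(x-x_0)+O(\|x-x_0\|^2)$, where $x_0$ is chosen as a nearby point of $\mathcal A$. The first term lies in $\Img J(x)$. For the remainder, the constant-rank local normal form $F=\psi\circ\pi\circ\varphi$ lets $x-x_0$ be chosen with $\|x-x_0\|\le C\|z\|$. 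This gives $\|(I-\Pi(x))z\|\le C'\|z\|^2$.

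Consequently, on a sufficiently small neighborhood $U$ of $\mathbf 0$ we have $\|\Pi(x)z\|\ge \tfrac12\|z\|$, so $\dot V_e\le-\tfrac14\|z\|^2=-\tfrac12V_e$. Therefore $\|z(t)\|\le e^{-t/4}\|z(0)\|$ for trajectories remaining in $U$, which is local exponential stability. Admissibility must hold along the trajectory for \eqref{eq:closed_loop_edge_flow} to apply; I would assume this by taking $U$ inside the image of the region where \eqref{eq:admissibility_exact} holds, consistent with the statement's "trajectories remaining in $U$" hypothesis.

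\emph{Main obstacle.} The delicate step is the tangency estimate $\|(I-\Pi(x))z\|=O(\|z\|^2)$. It requires care because $\Pi$ depends on $x$ rather than on $z$, and several $x$ may map to the same $z$. I would handle this through the constant-rank normal form: in local coordinates $F$ becomes a linear projection followed by an embedding $\psi$ with $\psi(0)=\mathbf 0$. Since $\Img J(x)=\Img D\psi$ at the relevant point, everything reduces to the embedded submanifold $\psi(\cdot)$. For that submanifold the estimate is the standard fact that a point of a $C^2$ submanifold near $\mathbf 0$ deviates from the tangent space at itself by $O(\|z\|^2)$. One must also check that $\mathbf 0-z$ is close to tangent at $z$, which follows by symmetry of the same Taylor argument.
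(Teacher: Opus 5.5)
Your feasibility step and the computation $\dot V_e=-\|\Pi(x)z\|^2$ are the same as the paper's. You differ in how you obtain the coercivity bound $\|\Pi(x)z\|\ge c\|z\|$.

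The paper argues qualitatively at the fixed point $\mathbf 0$. Secant directions $z/\|z\|$ of $Q$ at $\mathbf 0$ lie within $\tfrac12$ of $T_{\mathbf 0}Q$, so $\|\Pi(\mathbf 0)z\|\ge\tfrac12\|z\|$. Continuity of the projector then gives $\|(\Pi(\mathbf 0)-\Pi(x))z\|\le\tfrac14\|z\|$, hence $\|\Pi(x)z\|\ge\tfrac14\|z\|$ and $\dot V_e\le-\tfrac1{16}\|z\|^2$.

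You instead prove a quantitative second-order tangency estimate at the current state, $\|(I-\Pi(x))z\|=O(\|z\|^2)$. You get it by Taylor-expanding $F$ at $x$ toward a point $x_0\in\mathcal A$ with $\|x-x_0\|\le C\|z\|$, supplied by the constant-rank normal form.

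What each route buys:
\begin{itemize}
\item The paper's route is shorter and uses only $C^1$ data and continuity of $\Pi$. However, it needs a single well-defined projector $\Pi(\mathbf 0)$, i.e.\ one embedded sheet of $Q$ through $\mathbf 0$ with the same $\Img J$ at all relevant points of $\mathcal A$. That is exactly the ambiguity the paper avoided earlier by writing $\Pi$ as a function of $x$.
\item Your route never evaluates a projector at $z=\mathbf 0$ and works sheet by sheet even if $Q$ is only immersed. It also gives the sharper bound $\|\Pi(x)z\|\ge(1-C'\|z\|)\|z\|$, so the decay rate approaches that of $\dot z=-z$. The price is $C^2$ regularity (free here by analyticity) and the normal-form bookkeeping.
\end{itemize}

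Two small caveats. First, your constants are local to a chart around a point of $\mathcal A$, so, as in the paper, $U$ should be read as the image of a neighborhood of a compact piece of $\mathcal A$. Second, taking $U$ inside the image of the admissible region does not force admissibility at every preimage, because $F$ is many-to-one. Simply assume admissibility along the trajectory, as the paper does.
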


\begin{proof}
By construction, $v^\star(F(x))=-\Pi(x)F(x)\in\Img(J(x))$ because $\Pi(x)$ projects onto $\Img(J(x))$.
Under admissibility, $\Img(J(x))= \Img(A(x))$, hence $b(x)=v^\star(F(x))-J(x)f(x)\in \Img(A(x))$ and the constraint is feasible.
For a feasible linear equation $A(x)u=b(x)$, the pseudoinverse yields a solution satisfying $A(x)u^\star=b(x)$, which combined with
$\dot z = J(x)f(x)+A(x)u$ gives \eqref{eq:closed_loop_edge_flow}.

Along trajectories that 
remain in $\Omega$ and satisfy admissibility pointwise,
the edge Lyapunov function $V_e(z)=\tfrac12\|z\|^2$ satisfies 
\textcolor{black}{$$\dot V_e(z) = -\|\Pi(x)z\|^2 \leq 0.$$
On a sufficiently small neighborhood $U_1$ of ${\bf 0}$ in ${Q}$, 
$\|\Pi({\bf 0})z\| \ge \frac{1}{2}\|z\|$, because tangent vectors 
in $T_{\bf 0}{Q}$ are limits of the direction vectors of secants
(i.e., there is an $\varepsilon > 0$ so that if $0 < \|z\| < \varepsilon$, 
the  distance of $z/\|z\|$ to $T_{\bf 0} Q$ is less than $\frac{1}{2}$).
Independently, there is a 
neighborhood $U_2$ of ${\bf 0}$ in ${Q}$ such that 
$\|\Pi({\bf 0})z - \Pi(x)z\| \le \frac{1}{4}\|z\|$.  
Defining $U=U_1\cap U_2$ , we have for all 
$z \in U$, $\|\Pi(x)z\| \ge \frac{1}{4}\|z\|$.
On this neighborhood, we get the estimate 
\[
\dot V_e(z) = -\|\Pi(x)z\|^2 \le -\frac{1}{16}\|z\|^2<0.
\]
An application of Grönwall's inequality \cite{Khalil2002} then implies that 
$\|\Pi(x)z\|\to 0$  exponentially quickly.  With 
the lower bound, this further  implies that 
$z\to 0$ exponentially quickly.
}
\end{proof}

 Lemma~\ref{lem:realize_edge_flow} shows that admissibility is the precise 
 condition ensuring the physical closed-loop reproduces the
 designed (projected) edge gradient flow. 
\textcolor{black}{The above argument establishes local exponential convergence of 
$z$ to ${\bf 0}$ on a neighborhood $U \subseteq  Q$.
Now we establish that there is a forward invariant neighborhood of $\mathcal A$. 
Define, for $\rho > 0$, the closed 
sublevel set $\overline{U_\rho} := \{ z \in  Q : V_e(z) \le \rho \} \subseteq U$.
Then define $\Omega_\rho := \Omega \cap F^{-1}(\overline{U_\rho})$. Since 
${\cal A}$ is closed and $\Omega_{\rho=0} = {\cal A}$, for 
a sufficiently small $\rho > 0$, $\overline{\Omega_\rho}\subseteq \Omega$.
Since $\dot V_e(z) \le 0$ when $z\in U$,
trajectories initialized in
$\Omega_\rho$ satisfy $F(x(t)) \in U_\rho$ for all $t \ge 0$.
Since trajectories in $\Omega_\rho$ cannot reach the boundary $\partial \Omega$ without 
leaving $\overline{\Omega_\rho}$, we conclude that $\Omega_\rho$ is forward 
invariant. 
}

Since $\mathcal A=F^{-1}(\textcolor{black}{\bf 0})$ is a smooth embedded submanifold, 
fix a compact neighborhood $K\subset\Omega$ of a point $x^\star\in\mathcal A$.
By continuity, $J(x)$ is bounded on $K$. Therefore, under admissibility and exponential 
stability of the edge model, the standard ISS
argument of \textcolor{black}{\cite[Section 3.3]{Theran_IFAC26}} yields local exponential 
convergence of the lifted agent dynamics to $\mathcal A$.


\begin{remark}[Why Theorem~\ref{thm:distributed_outer_loop_consensus} is always admissible]
\label{rem:rd1_implies_admissible}
In the relative-degree-one specialization \eqref{eq:ydot_again}--\eqref{eq:local_inversion_again}, 
the controller satisfies the stronger agent-by-agent transversality condition 
$\Img G_i(x_i) + \Ker {\rm d}_{x_i}h_i = \mathbb{R}^{n_i}$ because $\Img H_i(x_i) = \mathbb{R}^d$.
Since the graph operator is linear, this implies admissibility at $x_i$.
\hfill $\Diamond$
\end{remark}

For underactuated agents or restricted actuation directions, admissibility can fail even on connected graphs.
In the next subsection we derive checkable sufficient conditions for (generic) admissibility in terms of structured rank and maximum matchings
of an associated bipartite graph. 

\subsection{Generic admissibility and combinatorial certificates}
\label{subsec:combinatorial_admissibility}

The admissibility condition in Definition~\ref{def:admissibility} is analytic and state-dependent through $J(x)$ and $G(x)$.
In this subsection we derive \emph{structure-based} 
sufficient conditions that can be checked combinatorially,
starting with the undirected setting.

Fix a region $\Omega$ and suppose that the zero/non-zero pattern of $Dh_i(x_i)G_i(x_i)$ is constant on $\Omega$.  We assume the controller is 
analytic, so that $A(x)$ achieves its maximal rank for almost all $x\in \Omega$.
We now identify a structural property of the graph $\G$ that 
implies the controller is generically admissible.




For a connected, undirected graph, $\G$ the edge disagreement 
vector $z=F(x)$ must lie in an $(N-1)d$-dimensional linear 
subspace cut out by cycle relations (i.e., the sum of its components around any cycle is zero).  Hence, 
the generic rank of $A(x)$ is at most 
$(N-1)d$.  We describe a combinatorial 
condition that implies equality, from 
which generic admissibility follows.

Let $\T\subseteq \E$ be a spanning tree with $|\T|=N-1$, and let
$B_\T\in\R^{N\times(N-1)}$ denote the incidence matrix restricted to the edges of $\T$ (with arbitrary orientation).
Define the tree-edge disagreement map,
\[
F_\T(x):=(B_\T^\top\!\otimes I_d)h(x),
\]
 and its Jacobian
\[J_\T(x):=DF_\T(x)=(B_\T^\top\!\otimes I_d)\,Dh(x).
\]
Since $B_\T$ has rank $N-1$, the tree-edge coordinates span the disagreement subspace; in particular,
$\rank(J_\T(x))=\rank(J(x))$ generically, and it suffices to certify that $A_\T(x):=J_\T(x)G(x)$ has rank $(N-1)d$ generically.

To this end, we introduce the following assumption.
\begin{assumption}[Generic independence of nonzero entries]
\label{ass:generic_independence}
For any spanning tree $\T$ of $\G$, the 
non-zero entries in $A_\T(x)$ can be varied 
independently by changing $x$.
\end{assumption}

\textcolor{black}{Assumption \ref{ass:generic_independence} is a standard structural-rank condition ensuring that the generic rank of $A_\T(x)$ is determined by its sparsity pattern, enabling the use of matching-based certificates. In this direction, }
%
%
%
construct a bipartite graph $\mathcal{H}_\T=(\mathcal{L},\mathcal{R},\mathcal{E})$ using the 
zero/non-zero pattern of 
$A_\T(x)$ as follows:
\begin{itemize}
\item Left vertices $\mathcal{L}$ index the $(N-1)d$ tree-edge coordinates:
$\mathcal{L}:=\{(e,k): e\in \T,\ k\in\{1,\dots,d\}\}$.

\item Right vertices $\mathcal{R}$ index the input channels: $\mathcal{R}:=\{(i,\ell): i\in \V,\ \ell\in\{1,\dots,p_i\}\}$.
\item An edge $((e,k),(i,\ell))\in\mathcal{E}$ is present iff the entry of $A_\T(x)$ corresponding to row $(e,k)$ and column $(i,\ell)$
is structurally nonzero.
\end{itemize}

\begin{definition}[Matching]
A \emph{matching} in a bipartite graph $\mathcal H=(\mathcal L,\mathcal R,\mathcal E)$ is a set of edges no two of which share a common
endpoint. A \emph{maximum matching} is a matching of maximum cardinality with size denoted by $\nu(\mathcal H)$.
\end{definition}



\begin{theorem}[Matching certificate for generic admissibility]
\label{thm:matching_certificate}
Let $\G$ be connected. Under Assumption~\ref{ass:generic_independence}, if
there is a spanning tree $\T$ of $\G$ such that 
\begin{equation}
\nu(\mathcal{H}_\T)=(N-1)d,
\label{eq:matching_full}
\end{equation}
then $\rank(A_\T(x))=(N-1)d$ for almost all $x\in\Omega$. Consequently,
\[
\rank(A(x))=\rank(J(x))=(N-1)d
\]
for almost all $x\in\Omega$, i.e., the system is generically admissible on $\Omega$.
\end{theorem}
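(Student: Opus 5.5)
The plan has three parts: a structural-rank step (the classical Kőnig--Frobenius argument), an analytic-genericity step, and a tree-to-full-graph transfer step.

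\textbf{Step 1: a nonvanishing minor.} A maximum matching of size $(N-1)d$ in $\mathcal H_\T$ saturates every left vertex. It therefore selects, for each tree-edge coordinate $(e,k)$, a distinct input channel $(i,\ell)$. Let $S\subseteq\mathcal R$ be the set of matched channels, so $|S|=(N-1)d$, and consider the square submatrix $A_\T(x)[\mathcal L,S]$. Expanding its determinant by the Leibniz formula, the matching contributes the monomial given by the product of its matched entries, and each of these entries is structurally nonzero. By Assumption~\ref{ass:generic_independence}, the nonzero entries can be varied independently as $x$ ranges over $\Omega$. The determinant, viewed as a polynomial in these independent entries, is then not identically zero, since distinct permutations yield distinct monomials that cannot cancel. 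Hence there is some $x_0\in\Omega$ with $\det A_\T(x_0)[\mathcal L,S]\neq 0$.

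\textbf{Step 2: analytic genericity.} The function $x\mapsto \det A_\T(x)[\mathcal L,S]$ is analytic on $\Omega$, because $h_i$ and $G_i$ are analytic. It is not identically zero, so its zero set has measure zero; I restrict to the connected component of $\Omega$ containing $x_0$, or argue componentwise. It follows that $\rank A_\T(x)=(N-1)d$ for almost all $x\in\Omega$. This is the same reasoning as the remark on admissibility concepts.

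\textbf{Step 3: transfer from $\T$ to $\G$.} I will use the factorization $B^\top=C\,B_\T^\top$ for a suitable $C\in\R^{m\times(N-1)}$. This holds because $\G$ is connected, so $\Img B=\Img B_\T=\mathbf 1_N^\perp$; explicitly, each non-tree edge column equals a signed sum of tree columns along its fundamental cycle. Consequently
\[
J(x)=(C\otimes I_d)J_\T(x),\qquad A(x)=(C\otimes I_d)A_\T(x).
\]
The restriction of $C$ to the tree rows is the identity, so $C\otimes I_d$ has full column rank $(N-1)d$. Ranks are therefore preserved: $\rank A(x)=\rank A_\T(x)$ and $\rank J(x)=\rank J_\T(x)$.

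To finish, I use that $\Img A(x)\subseteq\Img J(x)$, together with $\rank J(x)\le \rank J_\T(x)\le (N-1)d$, since $J_\T$ has $(N-1)d$ rows. At every point where $\rank A(x)=(N-1)d$, this gives
\[
(N-1)d=\rank A(x)\le\rank J(x)\le (N-1)d,
\]
so equality holds throughout and $\Img A(x)=\Img J(x)$. This is \eqref{eq:admissibility_exact} for almost all $x\in\Omega$.

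\textbf{Main obstacle.} I expect Step 1 to be the delicate point: making rigorous that ``independently variable entries'' rules out cancellation in the determinant. I would first argue via a generic-parameter reduction. Treat the structurally nonzero entries as indeterminates, note that the determinant is a nonzero polynomial in them, and then invoke Assumption~\ref{ass:generic_independence} to conclude that the image of $x\mapsto(\text{entries})$ is not contained in that polynomial's zero set.
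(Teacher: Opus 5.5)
Your proposal is correct and is essentially the paper's proof. The paper gets $\rank A_\T(x)=\nu(\mathcal H_\T)$ for almost all $x$ by citing Whiteley's Prop.~2.4 under Assumption~\ref{ass:generic_independence}; your Steps 1--2 (nonvanishing matched minor via the Leibniz expansion, then analytic genericity) reprove exactly the direction needed. You then close with the same chain $(N-1)d\le\rank A_\T(x)\le\rank A(x)\le\rank J(x)\le(N-1)d$, where your factorization $B^\top=C\,B_\T^\top$ is an explicit version of the paper's cycle-space bound $\rank J(x)\le(N-1)d$ and of the inequality $\rank A_\T(x)\le\rank A(x)$.
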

\begin{proof}
The upper bound on the rank of $J(x)$ was 
discussed above.  By 
\cite[Prop. 2.4]{Whiteley1989MatroidHypergraphs}, 
which applies because of Assumption 
\ref{ass:generic_independence}, 
for any fixed $\T$, the rank of $A_\T(x)$
is equal to $\nu(\mathcal{H}_\T)$ for almost 
all $x$.  Hence, we get a matching lower 
bound for the rank of $A_\T(x)$, if $\nu(\mathcal{H}_\T)=(N-1)d$ for 
some $\T$.  In this case, we have 
\[
    (N-1)d\le \rank A_\T(x) \le \rank A(x) \le \rank J(x)
    \le (N-1)d,
\]
so equality holds throughout and we are done.
\end{proof}


\begin{remark}[Hall condition and design interpretation]
A matching of size $(N-1)d$ exists iff Hall's condition holds \cite{BondyMurty2008}:
$|\Gamma(\mathcal{S})|\ge |\mathcal{S}|$ for every $\mathcal{S}\subseteq \mathcal{L}$, where $\Gamma(\mathcal{S})$ is the neighbor set in
$\mathcal{H}_\T$. This reads as: every subset of disagreement coordinates must be influenced by at least as many independent input channels. \hfill $\Diamond$
\end{remark}

Theorem~\ref{thm:matching_certificate} gives a purely combinatorial sufficient condition for (generic) feasibility of the edge lift.
In Section~\ref{sec:simulation} we illustrate how the matching fails when a row of a local actuation map is structurally zero, leading to
persistent disagreement in the corresponding output component.

\section{Simulation: Admissible vs.\ Non-Admissible Edge Actuation on a Limit Cycle}
\label{sec:simulation}

We illustrate the admissibility results on a synchronization problem for nonlinear oscillators. 
We compare two actuation patterns on the same network: one that is admissible and achieves synchronization,
and one that is non-admissible.

In this direction, {\color{black}{we consider three identical nonlinear oscillators coupled over the undirected path graph $1$--$2$--$3$.
Each agent has state $x_i\in\R^2$ with $x_i := \begin{bmatrix} r_i& \theta_i \end{bmatrix}\textcolor{black}{^\top},
\; r_i>0,\ \theta_i\in\mathbb S^1$, and evolves according to the control-affine dynamics \eqref{eq:control_affine_agent}.
The drift $f$ is chosen so that $r_i=1$ is a stable limit cycle and $\theta_i$ rotates at constant speed $\omega>0$:
\begin{equation*}
f(x_i)=
\begin{bmatrix}
(1-r_i^2)r_i\\
\omega
\end{bmatrix}.
\label{eq:sim_drift_control_affine}
\end{equation*}

The control input is the \emph{same signal type} in both cases,
$u_i:=\begin{bmatrix}u_{r,i}& u_{\theta,i}\end{bmatrix}\textcolor{black}{^\top}\in\R^2$,
and the difference between the two models is encoded entirely in $G_i$.}}

Each oscillator is observed through its Cartesian position on the plane,
\begin{equation}
y_i = h(x_i) :=
\begin{bmatrix}
r_i\cos\theta_i&
r_i\sin\theta_i
\end{bmatrix}\textcolor{black}{^\top}\in\R^2,
\label{eq:sim_output_map}
\end{equation}
and we stack $y=\col(y_1,y_2,y_3)\in\R^{6}$. Synchronization corresponds to $y_i(t)-y_j(t)\to 0$ for all $i,j$.

\paragraph{Case A (admissible)}
We take the control input as $u_i:=\col(u_{r,i},u_{\theta,i})\in\R^2$ and select $G_i^{(A)} = I_2,\, i=1,2,3$.
Thus both radial and angular channels are available, and the input can directly modify both the radius and the phase of each oscillator.

\paragraph{Case B (non-admissible)}
We use the same input signal $u_i:=\col(u_{r,i},u_{\theta,i})\in\R^2$ but restrict actuation to the radial direction by choosing $G_i^{(B)}=\left[\begin{smallmatrix}1&0\\0&0\end{smallmatrix}\right],\, i=1,2,3 $.
Equivalently, the angular channel is unavailable and is discarded by $G_i^{(B)}$. In particular, $\dot\theta_i=\omega$ for all $i$,
so phase offsets cannot be corrected by feedback and planar synchronization fails for generic initial conditions.

Let $F(y):=(B^\top\!\otimes I_2)y$ denote the edge map introduced earlier (specialized here to the linear relative-measurement setting),
and define the edge disagreement as
$z := F(y) = (B^\top\!\otimes I_2)\,y \in\R^4$.
Following the edge-space construction of Section~\ref{sec:edge_geometry_model} (model integrator \eqref{eq:edge_integrator} and
projected-gradient choice \eqref{eq:edge_gradient_flow}), we prescribe the virtual edge dynamics $\dot z = v^\star(z)$.
In this example the feasible edge set is the linear subspace $Q=\Img(F)$, so the orthogonal projector onto $T_zQ$ is the identity.
Consequently, the projected Riemannian gradient flow for $V_e(z)=\tfrac12\|z\|^2$ reduces to the Euclidean gradient flow $v^\star(z) = -k\,z,\qquad k>0$,
which is exactly the edge-space ``Laplacian'' feedback. Admissibility asks  whether the control directions available to the oscillators span the edge-velocity directions required by
above, i.e., whether the lift constraint admits an exact solution so that the closed-loop satisfies $\dot z=v^\star(z)$.
When this fails, the pseudoinverse lift yields the closest achievable edge velocity (the orthogonal projection of $v^\star(z)$ onto the
achievable edge-velocity subspace), which is precisely what the matching test in Section~\ref{subsec:combinatorial_admissibility} certifies.

Because the graph is a tree and the output dimension is $d=2$, there are $(n-1)d=4$ independent disagreement coordinates. The admissibility question is whether the available control
directions can realize these four independent edge-space directions.

For the path graph $1$--$2$--$3$, the tree edge map is $F_\tau(x)=(B_\tau^\top\!\otimes I_2)h(x)$ with
$h_i(r_i,\theta_i)$ given in \eqref{eq:sim_output_map}. 
Differentiating yields $J_\tau(x)=DF_\tau(x)=(B_\tau^\top\!\otimes I_2)Dh(x)$, where
\begin{equation}\label{eq:sim_J}
Dh_i(r_i,\theta_i)=
\begin{bmatrix}
\cos\theta_i & -r_i\sin\theta_i\\
\sin\theta_i & \ \ r_i\cos\theta_i
\end{bmatrix}
=:J_i.
\end{equation}
The tree lift matrix is $A_\tau(x):=J_\tau(x)G$, where $G=\mathrm{blkdiag}(G_1,G_2,G_3)$ encodes the available inputs.

\emph{Case A (radial and tangential actuation).}
Now, we compute the lift matrix as
\begin{equation}
A_\tau^{(A)}(x)=
\begin{bmatrix}
 -J_1 & J_2 & 0\\
 0 & -J_2 & J_3
\end{bmatrix}\in\R^{4\times 6}.
\label{eq:Atau_caseA}
\end{equation}
Thus the first edge block (rows for $12$) depends only on \textcolor{black}{vertex}-$1$ and \textcolor{black}{vertex}-$2$ inputs, and the second edge block (rows for $23$) depends
only on \textcolor{black}{vertex}-$2$ and \textcolor{black}{vertex}-$3$ inputs; each $2\times 2$ block is generically dense.  The bipartite matching graph is  shown in Fig. \ref{fig:match_caseA_side} and corresponding trajectories in Fig. \ref{fig:sim_phase_A}.  

\emph{Case B (radial-only actuation).}
We use the same input signal $u_i=\col(u_{r,i},u_{\theta,i})\in\R^2$ as in Case~A, but restrict actuation by choosing $G_i^{(B)}$, so the angular channel is unavailable. 
%
The tree lift matrix $A_\tau(x)=J_\tau(x)G^{(B)}$ is computed as
\begin{equation}
A_\tau^{(B)}(x)=
\begin{bmatrix}
 -a_1 & 0 & a_2 & 0 & 0 & 0\\
 0 & 0 & -a_2 & 0 & a_3 & 0
\end{bmatrix}\in\R^{4\times 6},
\label{eq:Atau_caseB}
\end{equation}
where $a_i:=\begin{bmatrix}\cos\theta_i& \sin\theta_i\end{bmatrix}\textcolor{black}{^\top}\in\R^2$, and each displayed block is a $2\times 1$ column and the zero columns correspond to the unavailable inputs $u_{\theta,i}$.
Consequently the bipartite graph associated with \eqref{eq:Atau_caseB} contains the isolated vertices
$u_{\theta,1},u_{\theta,2},u_{\theta,3}$ (Fig.~\ref{fig:match_caseB_side}), and the maximum matching satisfies
$\nu\le 3<4=(N-1)d$. Thus $\rank(A_\tau^{(B)}(x))< (N-1)d$ generically and the lift constraint is generically infeasible.
The matching condition predicts admissibility in Case~A and failure in Case~B, which is confirmed in Fig.~\ref{fig:sim_phase_B}.

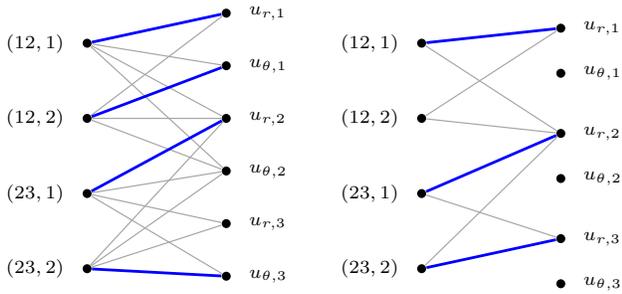
\begin{figure}[!h]
\centering
\begin{subfigure}[t]{0.485\columnwidth}
\centering
\begin{tikzpicture}[x=1cm,y=1cm,baseline]
  \tikzset{
    dot/.style={circle,fill,inner sep=1.2pt},
    Llab/.style={anchor=east,font=\scriptsize},
    Rlab/.style={anchor=west,font=\scriptsize},
    edge/.style={line width=0.45pt,draw=black!35},
    match/.style={line width=1.05pt,draw=blue}
  }

  \def\xL{0.0}
  \def\xR{1.85}
  \def\dxLab{0.18}

  \node[dot] (L1) at (\xL,3.0) {}; \node[Llab] at ($(L1)+(-\dxLab,0)$) {$(12,1)$};
  \node[dot] (L2) at (\xL,2.0) {}; \node[Llab] at ($(L2)+(-\dxLab,0)$) {$(12,2)$};
  \node[dot] (L3) at (\xL,1.0) {}; \node[Llab] at ($(L3)+(-\dxLab,0)$) {$(23,1)$};
  \node[dot] (L4) at (\xL,0.0) {}; \node[Llab] at ($(L4)+(-\dxLab,0)$) {$(23,2)$};

  \node[dot] (R1) at (\xR,3.4) {}; \node[Rlab] at ($(R1)+(\dxLab,0)$) {$u_{r,1}$};
  \node[dot] (R2) at (\xR,2.7) {}; \node[Rlab] at ($(R2)+(\dxLab,0)$) {$u_{\theta,1}$};
  \node[dot] (R3) at (\xR,2.0) {}; \node[Rlab] at ($(R3)+(\dxLab,0)$) {$u_{r,2}$};
  \node[dot] (R4) at (\xR,1.3) {}; \node[Rlab] at ($(R4)+(\dxLab,0)$) {$u_{\theta,2}$};
  \node[dot] (R5) at (\xR,0.6) {}; \node[Rlab] at ($(R5)+(\dxLab,0)$) {$u_{r,3}$};
  \node[dot] (R6) at (\xR,-0.1) {}; \node[Rlab] at ($(R6)+(\dxLab,0)$) {$u_{\theta,3}$};

  \foreach \LL in {L1,L2}{
    \foreach \RR in {R1,R2,R3,R4}{ \draw[edge] (\LL)--(\RR); }
  }
  \foreach \LL in {L3,L4}{
    \foreach \RR in {R3,R4,R5,R6}{ \draw[edge] (\LL)--(\RR); }
  }

  \draw[match] (L1)--(R1);
  \draw[match] (L2)--(R2);
  \draw[match] (L3)--(R3);
  \draw[match] (L4)--(R6);

\end{tikzpicture}
\caption{Case A (admissible): $\nu=4$. }
\label{fig:match_caseA_side}
\end{subfigure}\hfill
\begin{subfigure}[t]{0.485\columnwidth}
\centering
\begin{tikzpicture}[x=1cm,y=1cm,baseline]
  \tikzset{
    dot/.style={circle,fill,inner sep=1.2pt},
    Llab/.style={anchor=east,font=\scriptsize},
    Rlab/.style={anchor=west,font=\scriptsize},
    edge/.style={line width=0.45pt,draw=black!35},
    match/.style={line width=1.05pt,draw=blue}
  }

  \def\xL{0.0}
  \def\xR{1.85}
  \def\dxLab{0.18}

  \node[dot] (L1) at (\xL,3.0) {}; \node[Llab] at ($(L1)+(-\dxLab,0)$) {$(12,1)$};
  \node[dot] (L2) at (\xL,2.0) {}; \node[Llab] at ($(L2)+(-\dxLab,0)$) {$(12,2)$};
  \node[dot] (L3) at (\xL,1.0) {}; \node[Llab] at ($(L3)+(-\dxLab,0)$) {$(23,1)$};
  \node[dot] (L4) at (\xL,0.0) {}; \node[Llab] at ($(L4)+(-\dxLab,0)$) {$(23,2)$};

  \node[dot] (Rr1) at (\xR,3.2) {}; \node[Rlab] at ($(Rr1)+(\dxLab,0)$) {$u_{r,1}$};
  \node[dot] (Rt1) at (\xR,2.6) {}; \node[Rlab] at ($(Rt1)+(\dxLab,0)$) {$u_{\theta,1}$};

  \node[dot] (Rr2) at (\xR,1.8) {}; \node[Rlab] at ($(Rr2)+(\dxLab,0)$) {$u_{r,2}$};
  \node[dot] (Rt2) at (\xR,1.2) {}; \node[Rlab] at ($(Rt2)+(\dxLab,0)$) {$u_{\theta,2}$};

  \node[dot] (Rr3) at (\xR,0.4) {}; \node[Rlab] at ($(Rr3)+(\dxLab,0)$) {$u_{r,3}$};
  \node[dot] (Rt3) at (\xR,-0.2) {}; \node[Rlab] at ($(Rt3)+(\dxLab,0)$) {$u_{\theta,3}$};

  \foreach \LL in {L1,L2}{
    \foreach \RR in {Rr1,Rr2}{ \draw[edge] (\LL)--(\RR); }
  }
  \foreach \LL in {L3,L4}{
    \foreach \RR in {Rr2,Rr3}{ \draw[edge] (\LL)--(\RR); }
  }

  \draw[match] (L1)--(Rr1);
  \draw[match] (L3)--(Rr2);
  \draw[match] (L4)--(Rr3);

\end{tikzpicture}
\caption{Case B (non-admissible): $\nu=3<4$. }
\label{fig:match_caseB_side}
\end{subfigure}

\caption{Bipartite graphs associated with the sparsity pattern of $A_\tau(x)$. Gray edges show structural nonzeros; colored edges show a maximum matching.}
\label{fig:matching_side_by_side}
\end{figure}

\begin{figure}[!h]
\centering
\begin{subfigure}[t]{0.485\columnwidth}
\centering
\includegraphics[width=\linewidth]{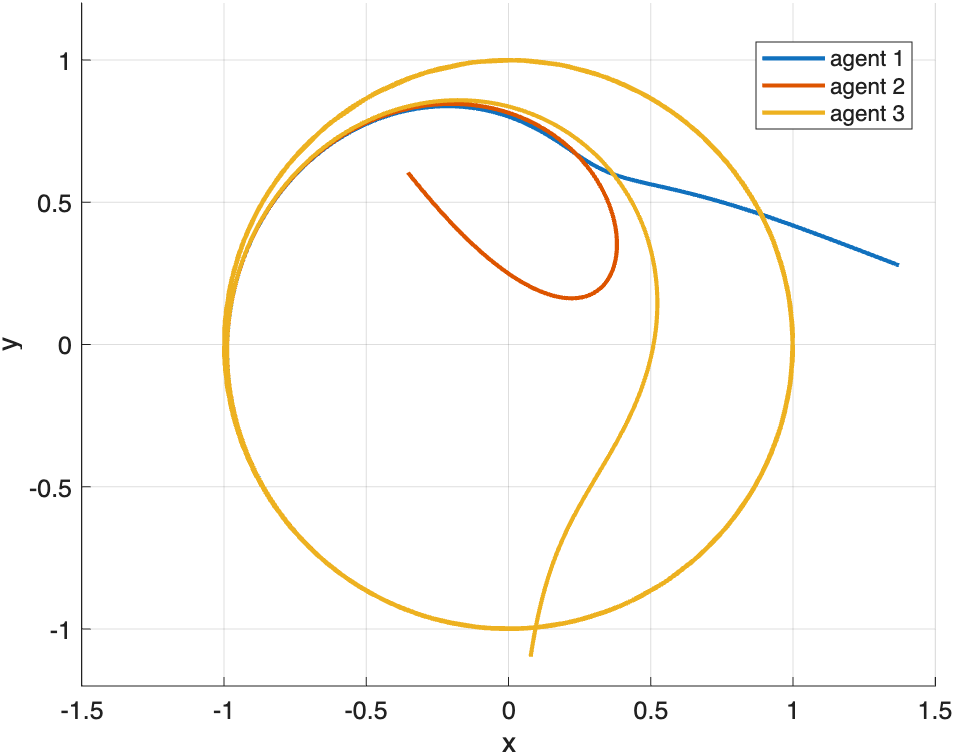}
\caption{Case A (admissible): phase-plane trajectories synchronize.}
\label{fig:sim_phase_A}
\end{subfigure}\hfill
\begin{subfigure}[t]{0.485\columnwidth}
\centering
\includegraphics[width=\linewidth]{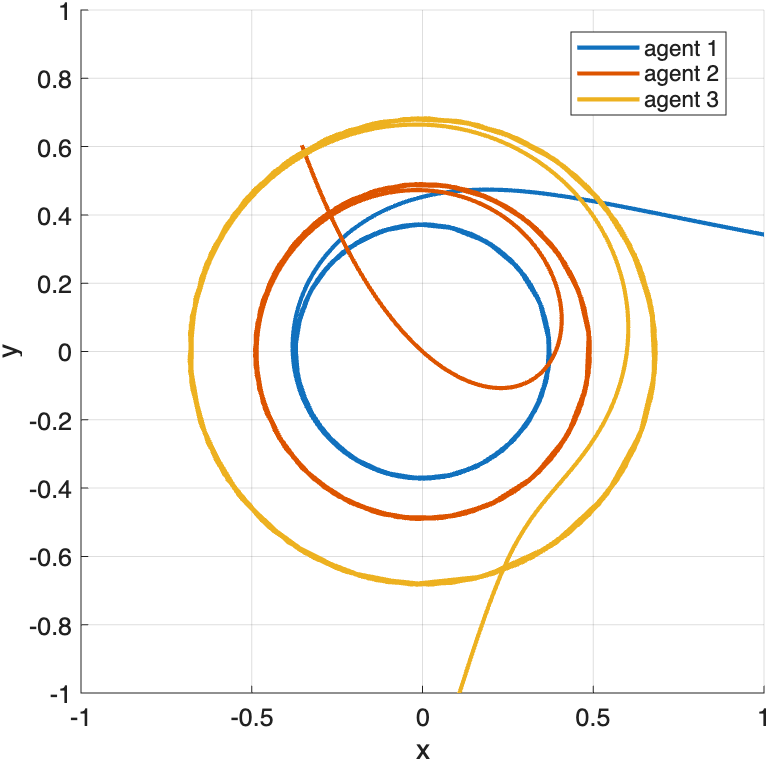}
\caption{Case B (non-admissible): phase-plane trajectories remain phase-shifted.}
\label{fig:sim_phase_B}
\end{subfigure}
\caption{Phase-plane trajectories for the oscillator network under the two actuation patterns.}
\label{fig:sim_phase_AB}
\end{figure}


\section{Concluding Remarks}
This paper developed an edge-driven geometric framework for diffusive coupling of heterogeneous control-affine networks. The design philosophy is to construct a stabilizing control in the edge space and then generate a feasible agent input through an appropriate lift from edge space to node space.  We identify \emph{admissibility} as a structural condition linking the interconnection graph with the actuation limits of the agents.
We further provided generic/combinatorial certificates, based on structured rank and maximum matchings, that predict when edge-driven diffusive designs can be realized and when \textcolor{black}{synchronization} is obstructed.

A  next step is to extend this geometric admissibility   viewpoint to settings where \emph{both} layers are nonlinear: i.e.,  with nonlinear measurement maps (e.g., distances, bearings, or other nonlinear relative outputs) and nonlinear agent dynamics.

\bibliographystyle{IEEEtran}
\bibliography{formation}

\end{document}